\DeclareMathOperator{\supp}{{\rm supp}}
\DeclareMathOperator{\argmax}{{\rm argmax}}
\newcommand{\argmin}{{\rm argmin\;}}
\newcommand{\E}{\mathbb{E}}
\newcommand{\p}{\mathbb{P}}
\newcommand{\Z}{\mathbb{Z}}
\newcommand*{\ind}[1]{\mathbf{1}_{\{#1\}}}
\newcommand*{\Ind}[1]{\mathbf{1}_{#1}}
\newcommand{\R}{\mathbb{R}}
\newtheorem{lemma}{Lemma}[section]
\newtheorem{theorem}[lemma]{Theorem}
\newtheorem{prop}[lemma]{Proposition}
\title{A note on the sample complexity of the Er-SpUD algorithm by Spielman, Wang and Wright for exact recovery of sparsely used dictionaries}
\author{Rados{\l}aw Adamczak\thanks{Institute of Mathematics, University of Warsaw, Banacha 2, 02-097 Warszawa, Poland. \mbox{E-mail: R.Adamczak@mimuw.edu.pl}}}
\begin{document}

\maketitle
\begin{abstract}
We consider the problem of recovering an invertible $n \times n$ matrix $A$ and a sparse $n \times p$ random matrix $X$ based on the observation of $Y = AX$ (up to a scaling and permutation of columns of $A$ and rows of $X$). Using only elementary tools from the theory of empirical processes we show that a version of the Er-SpUD algorithm by Spielman, Wang and Wright with high probability recovers $A$ and $X$ exactly, provided that $p \ge Cn\log n$, which is optimal up to the constant $C$.

\thanks{}
\end{abstract}
\section{Introduction}
Learning sparsely-used dictionaries has recently attracted considerable attention in connection to applications in machine learning, signal processing or computational neuroscience. In particular, two important fields of applications are \emph{dictionary learning} \cite{OlshausenField1996,Kreutz-DelgadoEtAl2003,BrucksteinDonohoElad2009,RubinsteinBrucksteinElad2010,YangEtAl2010} and \emph{blind source separation} \cite{ZibulevskyPearlmutter2001,GeorgievTheisCichocki2005}. We do not discuss these applications and refer the Reader to the aforesaid articles for details.

 Among many approaches to this problem a particularly successful one has been presented by Spielman, Wang and Wright \cite{SpielmanWangWrightCOLT,SpielmanWangWright13}, who considered the noiseless-invertible case:

\begin{framed} \noindent \textbf{The main problem:}

\medskip

\noindent  Consider an invertible $n \times n$ matrix $A$ and a random $n \times p$ sparse matrix $X$. Denote $Y = AX$. The objective is to reconstruct $A$ and $X$ (up to scaling and permutation of columns of $A$ and rows of $X$) based on the observable data $Y$.
\end{framed}

The Authors of \cite{SpielmanWangWright13} provide an algorithm which with high probability successfully recovers the matrices $A$ and $X$ up to rescaling and permutation of the columns of $A$ and rows of $X$, provided that $X$ is a sparse random matrix satisfying the following probabilistic assumptions.

\begin{framed}\noindent \textbf{Probabilistic model specification}

\begin{displaymath}
X_{ij} = \chi_{ij}R_{ij},
\end{displaymath}
where
\begin{itemize}
\item $\chi_{ij},R_{ij}$ are independent random variables,

\item $\chi_{ij}$ are Bernoulli distributed: $\p(\chi_{ij} = 1) = 1 - \p(\chi_{ij} = 0) = \theta$,

\item $R_{ij}$ are i.i.d., with mean zero and satisfy
\begin{align*}
\mu := \E|R_{ij}| & \ge 1/10,\\
\forall_{t> 0}\; \p(|R_{ij}| \ge t) &\le 2e^{-t^2/2}.
\end{align*}
\end{itemize}
\end{framed}

Following \cite{SpielmanWangWright13} we will say that matrices satisfying the above assumptions follow the Bernoulli-Subgaussian model with parameter $\theta$.

We remark that the constant 1/10 above is of no importance and has been chosen following \cite{SpielmanWangWright13} and \cite{LuhVu15}.

The approach of Spielman, Wang and Wright consists of two steps. At the first step (given by the Er-SpUD algorithm we describe below) one gathers $p/2$ candidates for the rows of $X$. The second, greedy step (Greedy algorithm, also described below) selects from the candidates the set of $n$ sparsest vectors, which form a matrix of rank $n$.

The algorithms work as follows:

\begin{framed}

\noindent \textbf{ER-SpUD(DC):  Exact  Recovery  of  Sparsely-Used  Dictionaries  using  the  sum  of
two  columns  of  Y   as  constraint  vectors.}

\begin{enumerate}
\item  \texttt{Randomly pair the columns of $Y$  into $p/2$ groups $g_j = \{Ye_{j_1}, Ye_{j_2}\}$.}
\item \texttt{For $j = 1,\ldots,p/2$\\
Let $r_j = Ye_{j_1} + Ye_{j_2}$, where $g_j = \{Ye_{j_1}, Ye_{j_2}\}$.\\
Solve $\min_w \|w^T Y\|_1$ subject to $r_j^T w = 1$, and set $s_j = w^T Y$.}
\end{enumerate}
\end{framed}

Above we use the convention that if $r_j=0$ (which happens with nonzero probability), and as a consequence the minimization problem has no solution, then we skip the corresponding step of the algorithm.

The second stage, described below, is run on the set $S$ of vectors $s_i$ returned at the first stage (for notational simplicity we relabel them if $r_j=0$ for some $j$). We use the standard notation that $\|x\|_0$ denotes the number of  nonzero coordinates of a vector $x$.
\begin{framed}
\noindent \textbf{Greedy: A  Greedy  Algorithm  to  Reconstruct  X  and  A.}
\begin{enumerate}
\item \texttt{  REQUIRE: $S = \{s_1,\ldots,s_T \} \subseteq \R^p$.}
\item\texttt{ For $i=1,\ldots,n$\\
REPEAT\\
$l \leftarrow \argmin_{s_l\in S} \| s_l\|_0$, breaking ties arbitrarily\\
$x_i = s_l$,
$S = S\setminus\{s_l\}$\\
UNTIL $rank([x_1,\ldots, x_i])= i$
\item Set $X = [x_1, \ldots, x_n]^T$  and $A = Y Y^T (XY^T)^{-1}$.}
\end{enumerate}
\end{framed}

In \cite{SpielmanWangWright13} it was proved that there exist positive constants $C,\alpha$, such that if
\begin{displaymath}
\frac{2}{n} \le \theta \le \frac{\alpha}{\sqrt{n}}
\end{displaymath}
and $p \ge Cn^2 \log^2 n$, then the ER-SpUD algorithm successfully recovers the matrices $A,X$ with probability at least $1 - \frac{1}{Cp^{10}}$. Note that the equation $Y = A'X'$ still holds if we set $A' = A\Pi \Lambda$ and $X' = \Lambda^{-1}\Pi^TX$ for some permutation matrix $\Pi$ and a nonsingular diagonal matrix $\Lambda$. Therefore, by recovery we mean that nonzero multiples of all the rows of $X$ are among the set $\{s_1,\ldots,s_{p/2}\}$ produced by the ER-SpUD(DC) algorithm. In \cite{SpielmanWangWright13} it is also proved that if $\p(R_{ij} = 0) = 0$, then for $p > Cn\log n$, with probability $1 - C'n\exp(-c\theta p)$ for any matrices $A',X'$ such that $Y = A' X'$ and $\max_{i} \|e_i^T X'\|_0 \le \max_{i} \|e_i^TX\|_0$ there exists a permutation matrix $\Pi$ and a nonsingular diagonal matrix $\Lambda$ such that $A' = A\Pi\Lambda$, $X' = \Lambda^{-1}\Pi^T X$. In fact, the Authors of \cite{SpielmanWangWright13} prove that with the above probability any row of $X$ is nonzero and has at most $(10/9)\theta p$ nonzero entries, whereas any linear combination of two or more rows of $X$ has at least $(11/9)\theta p$ entries.

In particular it follows that the Greedy algorithm will extract from the set $\{s_1,\ldots,s_{T}\}$ multiples of all $n$ rows of $X$ (note that all $s_j$'s are in the row space of $Y$ and thus also in the row space of $X$). Since, as one can prove, $X$ is with high probability of rank $n$, one easily proves that one can recover $A$ by the formula used in the 3rd step of the algorithm. We remark that in \cite{LuhVu15} Luh and Vu obtained the same results concerning sparsity of linear combinations of rows of $X$ without the assumptions about the symmetry of the variables $R_{ij}$.

Note also that for $\theta$ of the order $n^{-1}$, $p = Cn\log n$ is necessary for uniqueness of the solution in the sense described above, otherwise with significant probability some of the rows of $X$ may be zero, which means that some columns of $A$ do not influence the matrix $Y$.

In \cite{SpielmanWangWright13} it was also proved that if $p > Cn\log n$, $\theta > C'\sqrt{\frac{\log n}{n}}$, then with high probability the ER-SpUD algorithm does not recover any of the rows of $X$.

Spielman, Wang and Wright have conjectured that their algorithm works with high probability provided that $p > Cn\log n$ (which, as mentioned above is required for well-posedness of the problem).

Recently, Luh and Vu \cite{LuhVu15} have proved that the algorithm works for $p > Cn\log^4 n$, which differs from the conjectured number of samples just by a polylogarithmic factor.

In this note we will consider a modified version of the algorithm with a slightly different first stage. Namely, instead of using only $p/2$ pairs of columns of $Y$, we will use all $\binom{p}{2}$ pairs. For fixed $p$ it clearly increases the time complexity of the algorithm (which however remains polynomial), but the advantage of this modification is the possibility of proving that it requires only $p = Cn\log n$ to recover $X$ and $A$ with high probability, which as explained above is optimal. More specifically, we will consider the following algorithm.

\begin{framed}
\noindent \textbf{Modified ER-SpUD(DC):  Exact  Recovery  of  Sparsely-Used  Dictionaries  using  the  sum  of
two  columns  of  Y   as  constraint  vectors.}

\texttt{For $i = 1,\ldots,p-1$\\
\phantom{aa} For $j= i+1,\ldots,p$\\
\phantom{aaaa}Let $r_{ij} = Ye_{i} + Ye_{j}$\\
\phantom{aaaa}Solve $\min_w \|w^T Y\|_1$ subject to $r_{ij}^T w = 1$, and set $s_{ij} = w^T Y$.}
\end{framed}

The final step of the recovery algorithm is again a greedy selection of the sparsest vectors among the candidates collected at the first step. As before, under the assumption $\p(R_{ij} = 0) = 0$, the greedy procedure successfully recovers $X$ and $A$, provided that multiples of all the rows of $X$ are present among the input set $S$.

The main result of this note is

\begin{theorem}\label{thm:main}
There exist absolute constants $C,\alpha \in (0,\infty)$ such that if
\begin{displaymath}
\frac{2}{n} \le \theta \le \frac{\alpha}{\sqrt{n}}
\end{displaymath}
and $X$ follows the Bernoulli-Subgaussian model with parameter $\theta$, then for $p \ge Cn\log n$, with probability at least $1 - 1/p$ the modified ER-SpUD algorithm successfully recovers all the rows of $X$, i.e. multiples of all the rows of $X$ are present among the vectors $s_{ij}$ returned by the algorithm.
\end{theorem}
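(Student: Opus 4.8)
The plan is to use the invertibility of $A$ to reduce to a statement about $X$ alone, to single out for each row of $X$ one convenient member of the family of linear programs the algorithm solves, and to analyse that program by a dual‑certificate argument whose only genuinely hard ingredient is an empirical‑process estimate. Since $z\mapsto A^Tz$ is a bijection of $\R^n$ and $Y=AX$, the substitution $z=A^Tw$ turns the $(i,j)$‑th program into
\[
\min_{z\in\R^n}\|X^Tz\|_1\quad\text{subject to}\quad \langle Xe_i+Xe_j,\,z\rangle=1 ,
\]
with $s_{ij}=z^TX$ for the optimiser $z$. It therefore suffices to prove: with probability at least $1-1/p$, for every $k\in\{1,\dots,n\}$ there is a pair $i<j$ with $(Xe_i+Xe_j)_k\neq0$ whose unique minimiser is $z^\star=e_k/(Xe_i+Xe_j)_k$, i.e. $s_{ij}$ is a nonzero multiple of the $k$‑th row of $X$.

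The first step is the choice of the pair. Fix $k$, condition on the $k$‑th row $X_{k\cdot}$ (this fixes $S:=\supp X_{k\cdot}$ and leaves the matrix $X^{(k)}$ obtained from $X$ by deleting row $k$ a fresh Bernoulli–Subgaussian matrix), and call $(i,j)$ \emph{$k$‑admissible} if: the largest entry of the column $Xe_i$ lies in row $k$ and exceeds, by a small multiplicative margin (of order $1/\log(\theta n)$, which is far more than needed), every other entry of $Xe_i$ and every entry of $Xe_j$ in absolute value; $k\notin\supp Xe_j$ and $\supp Xe_i\cap\supp Xe_j=\emptyset$; $|(Xe_i)_k|\ge\mu/2$; and $\|X^{(k)}e_i\|_2,\|X^{(k)}e_j\|_2\le K\sqrt{\theta n}$. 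That for a typical column the maximal entry beats the others by the required margin, that a positive fraction of columns $j$ can be matched with a fixed such $i$, and the remaining local requirements, are elementary; the real point is that a column whose maximum lies in row $k$ need not exist at all, and that it exist for \emph{every} $k$ is a coupon‑collector phenomenon — each column's top row is essentially uniform over $\{1,\dots,n\}$ — so $p\ge Cn\log n$ guarantees, with probability $\ge1-n^{1-cC}$, that every $k$ is the top row of $\gtrsim p/n$ columns. Intersecting this with the conditionally independent local conditions and union‑bounding over $k$ shows that with probability $\ge1-\tfrac12/p$ a $k$‑admissible pair exists for every $k$; this is precisely where the $\log n$ and the loss of a factor $n$ in the probability enter.

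Next, for a $k$‑admissible pair one must certify optimality of $z^\star=e_k/\gamma$, $\gamma:=(Xe_i+Xe_j)_k$. Writing the KKT conditions for the $\ell_1$ program — and using that the only element of the row space of $X$ supported on $S$ is a multiple of $X_{k\cdot}$, which forces uniqueness — this reduces to the single inequality
\[
\bigl\langle b_1-b_0,\,u\bigr\rangle<\bigl\|(X^{(k)})_{S^c}^Tu\bigr\|_1\qquad\text{for all }u\in\R^{n-1}\setminus\{0\},
\]
equivalently, $b_1-b_0$ lies in the interior of the zonotope $\{(X^{(k)})_{S^c}g:\|g\|_\infty\le1\}$, where $(X^{(k)})_{S^c}$ is the column submatrix of $X^{(k)}$ indexed by $S^c$, $b_0:=\mathrm{sgn}(\gamma)\sum_{l\in S}\mathrm{sgn}(X_{kl})\,X^{(k)}e_l$ and $b_1:=\tfrac{\|X_{k\cdot}\|_1}{|\gamma|}\bigl(X^{(k)}e_i+X^{(k)}e_j\bigr)$. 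I would prove this by estimating the two linear terms against the $\ell_1$ term. The $b_0$ term is harmless: since $S$ and $S^c$ split the $p$ columns of $X^{(k)}$ independently of its entries, $|\langle b_0,u\rangle|\le\|(X^{(k)})_S^Tu\|_1\lesssim\theta\,\|(X^{(k)})_{S^c}^Tu\|_1$. For $b_1$ one splits on the geometry of $u$: for \emph{spread} $u$ one has $\|(X^{(k)})_{S^c}^Tu\|_1\gtrsim\sqrt\theta\,p\,\|u\|_2$ while, by Cauchy–Schwarz and $|\gamma|\ge\mu/2$, $\|X^{(k)}e_i\|_2,\|X^{(k)}e_j\|_2\le K\sqrt{\theta n}$, one gets $|\langle b_1,u\rangle|\lesssim\theta p\sqrt{\theta n}\,\|u\|_2$, and $\sqrt\theta\cdot\sqrt{\theta n}=\theta\sqrt n\le\alpha$, so the ratio is $O(\alpha)$ — negligible for $\alpha$ small; for $u$ concentrated near a coordinate $e_m$ with $m\in\supp X^{(k)}e_i$, the admissibility condition that row $k$ carries the isolated maximum of $Xe_i$ gives $|(X^{(k)}e_i)_m|\le(1-c/\log(\theta n))|\gamma|$, hence a gap $\|(X^{(k)})_{S^c}^Te_m\|_1-\langle b_1,e_m\rangle\gtrsim\|X_{k\cdot}\|_1/\log(\theta n)$, which comfortably beats the $O(\sqrt{\theta p})$ fluctuations of $b_0$ and of the row $\ell_1$‑norms; intermediate $u$ are handled by interpolation, using that $\langle X^{(k)}e_i,u\rangle$ can be large only when $u$ correlates with the essentially spread vector $X^{(k)}e_i$, in which case $\|(X^{(k)})_{S^c}^Tu\|_1$ is itself large.

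The main obstacle is the probabilistic content of this last step: one needs, with failure probability $o(1/np)$, uniform (in $u\in\R^{n-1}$) lower bounds of the type $\inf_{\|u\|_2=1}\|(X^{(k)})_{S^c}^Tu\|_1\gtrsim\theta p$, together with the comparisons of $\|(X^{(k)})_S^Tu\|_1$, $\|(X^{(k)})_{S^c}^Tu\|_1$ and $|\langle X^{(k)}e_i,u\rangle|$ used above, holding simultaneously over all $u$. Because $\theta p$ is only of order $\log n$, a crude $\varepsilon$‑net over the sphere in $\R^{n-1}$ is hopeless — the one‑point lower‑tail bound decays merely like $\exp(-c\theta^2p)$, which is dwarfed by the $9^{\,n}$ net points — so these estimates require a genuine empirical‑process argument (symmetrisation, the contraction principle, and control of the relevant expected suprema; equivalently, exploiting the combinatorial/graph structure of the sparse random matrix). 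Once these regularity events are in hand, choosing $\alpha$ small and $C$ large, intersecting the $O(1)$ global events with the per‑$k$ events of the first step, and union‑bounding over $k$ yields the asserted probability $1-1/p$.
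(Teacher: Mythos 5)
Your overall skeleton starts out parallel to the paper (the change of variables $z=A^Tw$, and a coupon-collector argument showing that for every $k$ some pair of columns produces a constraint vector $b$ whose $k$-th entry dominates; this is essentially the paper's Step 1 and Step 4), but you then replace the paper's two-stage analysis of the optimization problem by a direct KKT/dual-certificate analysis of the full $n$-dimensional program, and the probabilistic heart of that analysis is exactly what is missing. You yourself flag it: you need, with failure probability $o(1/np)$, statements such as $\inf_{\|u\|_2=1}\|(X^{(k)})_{S^c}^Tu\|_1\gtrsim\theta p$ together with uniform-in-$u$ comparisons of $\|(X^{(k)})_S^Tu\|_1$, $\|(X^{(k)})_{S^c}^Tu\|_1$ and $|\langle X^{(k)}e_i,u\rangle|$, and you propose to get them by ``symmetrisation, the contraction principle, and control of the relevant expected suprema.'' But over the Euclidean sphere that route cannot reach the required resolution: symmetrization and contraction bound the relevant supremum by $\E\|\sum_l\varepsilon_l Z_l\|_2\asymp\sqrt{p\theta n}$, which must be compared with the target scale $\theta p$; one needs $\sqrt{p\theta n}\ll\theta p$, i.e. $p\gg n/\theta\ge n^{3/2}/\alpha$, hopelessly far from $p\sim n\log n$ (at $\theta\sim 1/n$, $p\sim n\log n$ the fluctuation $\sqrt{p\theta n}\sim\sqrt{n\log n}$ dwarfs $\theta p\sim\log n$). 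Moreover at least one of the asserted uniform comparisons is false as stated: taking $u$ proportional to a column $X^{(k)}e_l$ with $l\in S$ gives $\|(X^{(k)})_S^Tu\|_1/\|(X^{(k)})_{S^c}^Tu\|_1$ of order $1/(\theta p)\sim 1/\log p$, not $O(\theta)$. Finally, the ``intermediate $u$ are handled by interpolation'' step is precisely the regime where the difficulty sits, and no argument is given for it. So the proposal has a genuine gap: its central uniform estimates are either unproved, unobtainable by the method invoked, or incorrectly quantified.

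It is worth noting how the paper sidesteps this. Its empirical-process estimate (Proposition \ref{prop:empirical-process}) is indexed by the $\ell_1$ ball, not the $\ell_2$ sphere: after contraction the supremum collapses to $\|\sum_i\varepsilon_iZ_i\|_\infty$, a maximum of $n$ Bernstein sums, giving deviations of order $\sqrt{\theta p\log p}$, which beat the mean lower bound $\E\|v^TX\|_1\gtrsim p\sqrt{\theta/n}\,\|v\|_1$ (Lemma \ref{le:4.16}) exactly when $p\gtrsim n\log n$. This $\|v\|_1$-calibrated bound is then used only for a support-localization step (Lemma \ref{le:4.2}: any minimizer is supported on $I_{ij}$ when $|I_{ij}|\le 1/(8\theta)$), after which the problem becomes low-dimensional and is finished by the restricted-problem lemma of Spielman--Wang--Wright (Lemma \ref{le:12}); no sphere-uniform lower bound of order $\theta p$, and no global dual certificate, is ever needed. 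If you want to salvage your route, you would have to either recalibrate all your certificate inequalities in terms of $\|u\|_1$ (at which point you are close to reconstructing the paper's Lemmas \ref{le:4.17} and \ref{le:4.2}) or supply a genuinely new multiscale argument for the $\ell_2$-uniform lower bound, which is not sketched in the proposal.
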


\paragraph{Remark} Very recently in \cite{SunQingWright15}, Sun, Qing and Wright proposed an algorithm with polynomial sample complexity, which recovers well conditioned dictionaries under the assumption that the variables $R_{ij}$ are i.i.d. standard Gaussian and $\theta \le 1/2$, thus allowing for the first time for a linear number of nonzero entries per column of the matrix $X$. Their novel approach is based on non-convex optimization. The sample complexity of the algorithms in \cite{SunQingWright15} is however higher then for the Er-SpUD algorithm; as mentioned by the Authors, numerical simulations suggest that it is at least $p = \Omega(n^2\log n)$ even in the case of orthogonal matrix $A$. The Authors of \cite{SunQingWright15} conjecture that algorithms with sample complexity $p = O(n\log n)$ should be possible also for large $\theta$.

\section{Proof of Theorem \ref{thm:main}\label{sec:proof}}

We will follow the general approach presented in \cite{SpielmanWangWright13} and \cite{LuhVu15}. The main new part of the argument is an improved bound on the sample complexity for empirical approximation of first moments of arbitrary marginals of the columns of the matrix $X$, given in Proposition \ref{prop:empirical-process} below. So as not to reproduce technical and lengthy parts of the original proof, we organize this section as follows. First, we present the crucial Proposition \ref{prop:empirical-process} and provide a brief discussion of its mathematical content. Next, we present an overview of the main steps in the proof scheme of \cite{SpielmanWangWright13}. For parts of the proof not related to Proposition \ref{prop:empirical-process} or to the modification of the algorithm considered here, we only indicate the relevant statements from \cite{SpielmanWangWright13}, while for the part involving the use of Proposition \ref{prop:empirical-process} and for the conclusion of the proof we provide the full argument. Proposition \ref{prop:empirical-process} is proved in Section \ref{sec:proof-of-proposition}.

Below by $e_1,\ldots,e_N$ we will denote the standard basis in $\R^N$ for various choices of $N$ (in particular for $N = n$ and $N=p$). The value of $N$ will be clear from the context and so this should not lead to ambiguity.

By $B_1^n$ we will denote the unit ball in the space $\ell_1^n$, i.e. $B_1^n = \{x\in \R^n \colon \|x\|_1\le 1\}$, where for $x = (x(1),\ldots, x(n))$, $\|x\|_1 = \sum_{i=1}^n |x(i)|$. The coordinates of a vector $x$ will be denoted by $x(i)$ or if it does not interfere with other notation (e.g. for indexed families of vectors) simply by $x_i$. Again, the meaning of the notation will be clear from the context.

\begin{prop}\label{prop:empirical-process}
Let $U_1,U_2,\ldots, U_p,\chi_1,\ldots,\chi_p$ be independent random vectors in $\R^n$. Assume that for some constant $M$ and all $1\le i \le p$, $1\le j\le n$,
\begin{align}\label{eq:exp_integr}
\E e^{|U_{i}(j)|/M} \le 2
\end{align}
and
\begin{displaymath}
\p( \chi_i(j) = 1) = 1 - \p(\chi_i(j) = 0) = \theta.
\end{displaymath}
Define the random vectors $Z_1,\ldots,Z_p$ with the equality $Z_i(j) = U_i(j)\chi_i(j)$ for $1\le i \le p$, $1\le j\le n$ and consider the random variable
\begin{align}
W := \sup_{x \in B_1^n} \Big|\frac{1}{p}\sum_{i=1}^p (|x^T Z_i| - \E |x^T Z_i|) \Big|.
\end{align}
Then, for some universal constant $C$ and every $q \ge \max(2,\log n)$,
\begin{align}
\|W\|_{q} \le \frac{C}{p}(\sqrt{p\theta q} + q  )M
\end{align}
and as a consequence
\begin{align}
\p\Big(W \ge \frac{Ce}{p}(\sqrt{p\theta q} + q )M\Big) \le e^{-q}.
\end{align}
\end{prop}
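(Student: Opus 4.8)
The plan is to combine symmetrization, the contraction principle, and a Bernstein-type moment inequality, exploiting the fact that the sparsified variables $Z_i(j) = U_i(j)\chi_i(j)$ have variance of order $\theta M^2$ while their $\psi_1$-norm is only of order $M$; this discrepancy is precisely what produces the factor $\sqrt{\theta}$ in front of $\sqrt{pq}$. Since $x\mapsto\frac1p\sum_i(|x^TZ_i| - \E|x^TZ_i|)$ is a difference of two convex functions, one cannot reduce the supremum over $B_1^n$ directly to the vertices $\pm e_j$, so I would first symmetrize. Introducing i.i.d.\ Rademacher signs $\ve_1,\dots,\ve_p$ independent of everything else, the symmetrization inequality for empirical processes (applied with the convex function $u\mapsto u^q$; the centering $\E|x^TZ_i|$ is deterministic, so it simply disappears) gives
\begin{align*}
\|W\|_q \le \frac{2}{p}\Big\|\sup_{x\in B_1^n}\Big|\sum_{i=1}^p\ve_i|x^TZ_i|\Big|\Big\|_q .
\end{align*}
Conditioning on $(Z_i)_i$ and applying the contraction principle for Rademacher processes (the map $t\mapsto|t|$ is $1$-Lipschitz and vanishes at $0$), followed by the identity $\sup_{x\in B_1^n}|x^Tv| = \|v\|_\infty$, yields
\begin{align*}
\|W\|_q \le \frac{4}{p}\Big\|\sup_{x\in B_1^n}\Big|\sum_i\ve_i x^TZ_i\Big|\Big\|_q = \frac{4}{p}\Big\|\max_{1\le j\le n}\Big|\sum_i\ve_iZ_i(j)\Big|\Big\|_q .
\end{align*}

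Next I would pass from the maximum over $j$ to a single coordinate using the hypothesis $q\ge\log n$: for arbitrary random variables $S_1,\dots,S_n$ one has $\|\max_j|S_j|\|_q\le n^{1/q}\max_j\|S_j\|_q\le e\max_j\|S_j\|_q$. It then remains to bound $\|S_j\|_q$ for fixed $j$, where $S_j = \sum_{i=1}^p\ve_iZ_i(j) = \sum_{i=1}^p\ve_i\chi_i(j)U_i(j)$ is a sum of independent symmetric (hence mean-zero) random variables. From \eqref{eq:exp_integr} one gets $\E|U_i(j)|^k\le 2\,k!\,M^k$ for every $k\ge1$, so the summands $\eta_i := \ve_i\chi_i(j)U_i(j)$ satisfy $\E\eta_i^2 = \theta\,\E U_i(j)^2\le 4\theta M^2$ and, more generally, $\E|\eta_i|^k = \theta\,\E|U_i(j)|^k\le \frac{k!}{2}\,(4\theta M^2)\,M^{k-2}$ for all $k\ge2$; that is, they obey the classical Bernstein moment condition with variance parameter $4\theta M^2$ and scale parameter $M$. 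Bernstein's inequality (equivalently, a direct Chernoff bound on $\E e^{\lambda S_j}\le\exp(Cp\theta M^2\lambda^2)$ valid for $|\lambda|\le 1/(2M)$) then gives $\p(|S_j|\ge t)\le 2\exp\big(-c\min(t^2/(p\theta M^2),\,t/M)\big)$, which in moment form reads $\|S_j\|_q\le C M\big(\sqrt{p\theta q}+q\big)$ for all $q\ge1$.

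Combining the three displays yields $\|W\|_q\le (C/p)\big(\sqrt{p\theta q}+q\big)M$ for a universal constant $C$, and the stated tail bound follows from Markov's inequality applied to $W^q$ with threshold equal to $e$ times this moment bound. I expect the only points requiring real care to be the bookkeeping in the symmetrization and contraction steps (one needs their $L^q$, rather than in-expectation, versions — both are classical, with universal constants, since they follow from the comparison theorem for convex nondecreasing $\Phi$) and the verification that the sparsified variables satisfy the Bernstein condition with the \emph{small} variance $\theta M^2$ rather than $M^2$; the latter is where the gain over a crude $\psi_1$-only estimate comes from, and it is this gain that ultimately yields the optimal sample complexity $p = Cn\log n$.
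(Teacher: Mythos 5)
Your proposal is correct and follows essentially the same route as the paper's own proof: symmetrization in $L^q$, Talagrand's contraction principle applied conditionally to strip the absolute value, the identity $\sup_{x\in B_1^n}|x^Tv|=\|v\|_\infty$ together with $n^{1/q}\le e$ for $q\ge\log n$, and the Bernstein moment bound with variance parameter of order $\theta M^2$ coming from $\E|Z_i(j)|^k=\theta\,\E|U_i(j)|^k\le 2k!\,\theta M^k$, finishing with Markov's inequality for the $q$-th moment. No substantive differences to report.
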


The above proposition can be considered a quantitative version of the uniform law of large numbers for linear functionals $x^TZ$ indexed by the unit sphere in the space $\ell_1^n$. As such it is a classical object of study in the theory of empirical processes. The proof we give uses only Bernstein's inequality (see e.g. \cite{vanderVaartWellner1996}) and Talagrand's contraction principle \cite{LedouxTalagrand1991}, which in a somewhat similar context was applied e.g. in \cite{Mendelson2008,ALPT2010}.

Let us also remark that in the above proposition we do not require independence between components of the random vectors $U_i$ or $\chi_i$ for fixed $i$, but just independence between the random vectors $U_i,\chi_i, i=1,\ldots,p$.

\subsection{Main steps of the proof of Theorem \ref{thm:main}} As announced, we will now present an outline of the proof of Theorem \ref{thm:main}, indicating which steps differ from the original argument in \cite{SpielmanWangWright13}.

\medskip
\noindent{\textbf{Step 1.} A change of variables.}
\medskip

Recall that $r_{ij}$ are sums of two columns of the matrix $Y$. At the first step of the proof, instead of looking at the original optimization problem
\begin{align}\label{eq:optimization-problem}
\textrm{minimize } \|w^T Y\|_1 \textrm{ subject to } r_{ij}^T w = 1
\end{align}
one performs a change of variables $z = A^Tw$, $b_{ij} = A^{-1}r_{ij}$, arriving at the optimization problem
\begin{align}\label{eq:changed-problem}
\textrm{minimize } \|z^T X\|_1 \textrm{ subject to } b_{ij}^T z = 1.
\end{align}

Note that one cannot solve \eqref{eq:changed-problem} since it involves the unknown matrices $X$ and $A$. The goal of the subsequent steps is to prove that with probability separated from zero the solution $z_\ast$ of \eqref{eq:changed-problem} is a multiple of one of the basis vectors $e_1,\ldots,e_n$, say $z_\ast = \lambda e_k$. This means that $w_\ast^T Y =  z_\ast^T X = \lambda e_k^T X$, i.e. \eqref{eq:optimization-problem} recovers the $k$-th row of $X$ up to scaling.

\medskip
\noindent{\textbf{Step 2.} The solution $z_\ast$ satisfies $\supp(z_\ast) \subseteq \supp(b_{ij})$.}
\medskip

At this step we prove the following lemma, which is a counterpart of Lemma 11 in \cite{SpielmanWangWright13}. It is weaker in that we do not consider arbitrary vectors $b_{ij}$, but only sums of two distinct columns of $X$ (which is enough for the application in the proof of Theorem \ref{thm:main}). On the other hand it works already for $p > Cn\log n$ and not for $p > Cn^2\log n$ as the original lemma from \cite{SpielmanWangWright13}.

\begin{lemma}\label{le:4.2}

For $1\le i< j \le p$, define  $b_{ij} = Xe_i + Xe_j$, $I_{ij} = (\supp Xe_i) \cup (\supp Xe_j)$.
There exist numerical constants $C,\alpha> 0$ such that if $2/n \le \theta \le \alpha/\sqrt{n}$ and $p > Cn\log n$, then with probability at least $1-p^{-2}$ the random matrix $X$ has the following property:

(P1)  For every $1\le i < j \le p$ either $|I_{ij}| \in\{0\} \cup (1/(8\theta),n]$ or every solution $z_\ast$ to the optimization problem \eqref{eq:changed-problem} satisfies $\supp z_\ast \subseteq I_{ij}$.
\end{lemma}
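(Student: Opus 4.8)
The plan is to fix a pair $1\le i<j\le p$, condition on the two columns $Xe_i$ and $Xe_j$ of $X$ (which makes $I:=I_{ij}$ and $b_{ij}$ deterministic), discard the trivial case $|I|\in\{0\}\cup(1/(8\theta),n]$, and show that in the remaining case $0<|I|\le 1/(8\theta)$ the support containment holds with conditional probability at least $1-Cp^{-4}$; a union bound over the $\binom p2$ pairs then gives property (P1) with probability at least $1-p^{-2}$. We will work on a good event which also includes that $X$ has rank $n$ (known to hold with high probability), so that $z\mapsto\|z^TX\|_1$ is a norm, the minimum in \eqref{eq:changed-problem} is attained, and $v^TX=0$ forces $v=0$.

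So fix $z_\ast$ minimizing \eqref{eq:changed-problem} and split $z_\ast=u+v$ with $\supp u\subseteq I$ and $\supp v\subseteq I^c:=\{1,\dots,n\}\setminus I$. Since $\supp b_{ij}\subseteq I$, the vector $u$ is also feasible, hence $\|z_\ast^TX\|_1\le\|u^TX\|_1$, and the aim is to show this is strict when $v\ne 0$, a contradiction. Writing $M:=\{m:\supp(Xe_m)\cap I\ne\emptyset\}$ and noting that $(u^TX)(m)=0$ for $m\notin M$, the triangle inequality gives $\|z_\ast^TX\|_1\ge\|u^TX\|_1+\|v^TX\|_1-2\sum_{m\in M}|(v^TX)(m)|$, so it suffices to prove that on the good event $\sum_{m\in M}|(v^TX)(m)|<\tfrac12\|v^TX\|_1$ for every nonzero $v$ supported in $I^c$. (Since $Xe_i$ and $Xe_j$ are supported in $I$, the two conditioned columns contribute $0$ to both sides and play no role.)

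To establish that uniform inequality I would proceed as follows. First, $\p(\supp(Xe_m)\cap I\ne\emptyset)=1-(1-\theta)^{|I|}\le\theta|I|\le 1/8$, so a Chernoff bound puts $|M|\le p/4$ off an event of probability $\le p^{-4}$. Condition further on the rows of $X$ indexed by $I$: this fixes $M$, while the submatrix $X'$ of $X$ formed by the rows indexed by $I^c$ has its $p-2$ relevant columns i.i.d.\ Bernoulli--Subgaussian and independent of $M$, and $v^TX=v^TX'$ for $v$ supported in $I^c$. Apply Proposition \ref{prop:empirical-process} twice with $q=4\log p$ (so $q\ge\max(2,\log n)$ and $e^{-q}=p^{-4}$): once with the $p-2$ columns of $X'$ in the role of $Z_1,\dots,Z_{p-2}$, and once with the $\le p/4$ columns indexed by $M$. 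This yields, off an event of probability $\le Cp^{-4}$, a uniform deviation bound $W\le W_0$ with $W_0\le\frac{Ce}{p}(\sqrt{p\theta q}+q)M$, both for the average over all $p-2$ columns and for the average over $M$. Using $p\ge Cn\log n$ and $\theta\le\alpha/\sqrt n$ one checks $W_0\le\frac{c_1M}{\sqrt C}\sqrt{\theta/n}$. Combining with the expectation bounds, for $v$ supported in $I^c$ with $\|v\|_1=1$ one gets $\|v^TX\|_1\ge(p-2)(\E|v^TZ|-W_0)$ and $\sum_{m\in M}|(v^TX)(m)|\le\tfrac p4(\E|v^TZ|+W_0)$, where $Z$ is a generic column of $X'$, hence $\tfrac12\|v^TX\|_1-\sum_{m\in M}|(v^TX)(m)|\ge\tfrac p8\E|v^TZ|-pW_0$; by homogeneity the same holds with a factor $\|v\|_1$ on the right. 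Thus everything reduces to the lower bound $\E|v^TZ|\ge c_\ast\sqrt{\theta/n}\,\|v\|_1$ for all $v\in\R^{I^c}$, with $c_\ast$ absolute: given this, choosing $C$ large enough (depending only on the subgaussian constant) makes $8W_0\le c_\ast\sqrt{\theta/n}$, so the left-hand side above is strictly positive for $v\ne 0$ and the contradiction with minimality follows.

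This last lower bound is the crux. I would prove it by conditioning on the Bernoulli mask $\chi$ of $Z$: given $\chi$, $v^TZ$ is a sum of independent centered subgaussian variables with second moment $\gtrsim\sum_k v(k)^2\chi_k$, so a Khinchine/Paley--Zygmund estimate gives $\E[\,|v^TZ|\mid\chi\,]\ge c\big(\sum_k v(k)^2\chi_k\big)^{1/2}$; a short second-moment computation for $\sum_k v(k)^2\chi_k$ (of mean $\theta\|v\|_2^2$) then yields $\E|v^TZ|\ge c'\sqrt\theta\|v\|_2$ whenever $\|v\|_\infty^2\le\theta\|v\|_2^2$, while a single-coordinate estimate handles the remaining ``spiky'' vectors via $\E|v^TZ|\ge c''\theta\|v\|_\infty$; since $\theta\ge 2/n$ and $\|v\|_2\ge\|v\|_1/\sqrt{|I^c|}\ge\|v\|_1/\sqrt n$, in both regimes this is at least $c_\ast\sqrt{\theta/n}\,\|v\|_1$. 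Finally I would collect the discarded low-probability events (control of $|M|$, the two applications of Proposition \ref{prop:empirical-process}, and rank-$n$-ness of $X$) and union-bound over the $\binom p2$ pairs. The main obstacle is obtaining this lower bound with exactly the power $\sqrt{\theta/n}$ of $\theta$ and uniformly over $v$ — in particular not losing to spiky vectors — because it is precisely the matching of this power against the $\sqrt{p\theta q}$ term in Proposition \ref{prop:empirical-process} that produces the near-optimal sample size $p\gtrsim n\log n$ rather than the $p\gtrsim n^2\log n$ of \cite{SpielmanWangWright13}.
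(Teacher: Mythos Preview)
Your overall architecture is exactly the paper's: fix a pair, condition on the two columns $Xe_i,Xe_j$, control $|M|\le p/4$, condition further on the rows indexed by $I$, split $z_\ast=u+v$, reduce via the triangle inequality to $\|v^TX'\|_1-2\|v^TX'_{M}\|_1>0$ for nonzero $v$ supported on $I^c$, handle this by combining Proposition~\ref{prop:empirical-process} with the marginal lower bound $\E|v^TZ|\ge c_\ast\sqrt{\theta/n}\,\|v\|_1$, and finish with a union bound over pairs. The paper packages the comparison step as Lemma~\ref{le:4.17} and isolates the marginal lower bound as Lemma~\ref{le:4.16}; your rank-$n$ clause is harmless but unnecessary, since the strict inequality for $v\ne 0$ already produces the contradiction with minimality.

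Where your sketch breaks is in the spiky branch of the marginal lower bound. The estimate $\E|v^TZ|\ge c''\theta\|v\|_\infty$, together with $\theta\ge 2/n$ and $\|v\|_2\ge\|v\|_1/\sqrt n$, does \emph{not} imply $c_\ast\sqrt{\theta/n}\,\|v\|_1$. Take $v=e_1+n^{-1/2}\sum_{k=2}^n e_k$: then $\|v\|_\infty=1$, $\|v\|_2^2\approx 2$, $\|v\|_1\approx\sqrt n$, and the spiky condition $\|v\|_\infty^2>\theta\|v\|_2^2$ holds for every $\theta<1/2$; yet $\theta\|v\|_\infty=\theta$ while $\sqrt{\theta/n}\,\|v\|_1\approx\sqrt\theta$, a shortfall of $\sqrt\theta$. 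In general the spiky hypothesis only gives $\|v\|_\infty\ge\sqrt{\theta/n}\,\|v\|_1$, so $\theta\|v\|_\infty\ge\theta^{3/2}n^{-1/2}\|v\|_1$, a full power of $\theta$ short of the target. You correctly flagged this step as the crux; the paper does not use a spiky/flat dichotomy at all but instead symmetrizes with independent Rademachers (making the summands symmetric) and then invokes Lemma~16 of \cite{SpielmanWangWright13}, which delivers $\frac{\mu}{8}\sqrt{\theta/n}\,\|v\|_1$ directly --- this is Lemma~\ref{le:4.16}. With that lemma in place of your dichotomy, the rest of your argument goes through verbatim.
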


To prove the above lemma, one first shows a counterpart of Lemma 16 in \cite{SpielmanWangWright13}.

\begin{lemma}\label{le:4.16} For any $1\le j \le p$, if $Z = (\chi_{1j}R_{1j},\ldots,\chi_{nj}R_{nj})$, then for all $v \in \R^n$,
\begin{displaymath}
\E |v^T Z| \ge \frac{\mu}{8}\sqrt{\frac{\theta}{n}}\|v\|_1.
\end{displaymath}
\end{lemma}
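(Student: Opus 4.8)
The plan is to reduce, by homogeneity, to the case $\|v\|_1=1$, and to work with the i.i.d.\ coordinates of $Z$, which we may write as $Z_k=\chi_kR_k$ with $\chi_k\sim\mathrm{Bernoulli}(\theta)$ and $R_k$ mean-zero and sub-gaussian with $\E|R_k|=\mu$. I would establish the bound by assembling two complementary estimates for $\E|\sum_k v_kZ_k|$ and combining them according to how ``peaky'' the vector $v$ is.

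The first estimate is a first-moment bound, sharp when $v$ is concentrated on few coordinates. Its engine is the elementary fact that for a mean-zero real random variable $X$ and any random variable $T$ independent of $X$ one has $\E|X+T|\ge\tfrac12\E|X|$: reducing to the case of constant $T=c\ge0$, and writing $x_-=\max(-x,0)$, from $\E X=0$ one gets $\E|X+c|=c+2\E(X+c)_-$ and $(X+c)_-\ge X_--c$ pointwise, whence $\E|X+c|\ge c+2(\E X_--c)=\E|X|-c$ (using $\E X_-=\tfrac12\E|X|$), while if $c\ge\tfrac12\E|X|$ one uses $\E|X+c|\ge c$ instead. Relabelling so that $|v_1|\ge\cdots\ge|v_n|$, conditioning on the random support $S=\{k:\chi_k=1\}$ and, on $\{S\neq\emptyset\}$, on all $R_k$, $k\in S$, except $k^\star:=\min S$, and applying the fact to $X=v_{k^\star}R_{k^\star}$ gives
\[
\E\Big|\sum_k v_kZ_k\Big|\ \ge\ \frac{\mu}{2}\,\E\Big[\max_{k\in S}|v_k|\Big]\ =\ \frac{\mu\theta}{2}\sum_{k=1}^{n}|v_k|(1-\theta)^{k-1},
\]
since $\p(\min S=k)=\theta(1-\theta)^{k-1}$. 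Keeping the first term yields $\gtrsim\mu\theta\|v\|_\infty$, while Chebyshev's sum inequality applied to the $|\supp v|$ nonzero terms yields $\gtrsim\mu\,(1-(1-\theta)^{|\supp v|})/|\supp v|$.

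The second estimate is a second-moment bound, sharp when $v$ is spread out. Conditioning on $S$ and applying Hölder's inequality $\E|Y|\ge(\E Y^2)^{3/2}/(\E Y^4)^{1/2}$ to $Y=\sum_{k\in S}v_kR_k$, together with $\E R^2\ge(\E|R|)^2=\mu^2$ (Jensen, as $\E R=0$) and the bound $\E R^4\le C$ coming from the sub-gaussian tail (so that $\E Y^4\le C'(\sum_{k\in S}v_k^2)^2$ for this sum of independent variables), one gets $\E[\,|\sum_k v_kZ_k|\mid S]\gtrsim\mu(\sum_{k\in S}v_k^2)^{1/2}$; averaging over $S$ and using the same Hölder estimate on the nonnegative variable $\sum_k\chi_kv_k^2$ (with first two moments $\theta\|v\|_2^2$ and at most $\theta^2\|v\|_2^4+\theta\|v\|_\infty^2\|v\|_2^2$) leads to
\[
\E\Big|\sum_k v_kZ_k\Big|\ \gtrsim\ \mu\,\frac{\theta\|v\|_2^2}{(\theta\|v\|_2^2+\|v\|_\infty^2)^{1/2}}\ \gtrsim\ \mu\sqrt{\theta}\,\|v\|_2\qquad\text{when }\|v\|_\infty^2\lesssim\theta\|v\|_2^2,
\]
and then $\|v\|_2\ge\|v\|_1/\sqrt n$ gives a bound of the form $\mu\sqrt{\theta/n}\,\|v\|_1$ with a constant comfortably larger than $1/8$.

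The conclusion is a dichotomy. If $\|v\|_\infty^2\le\theta\|v\|_2^2$ (or a similar ``spread'' condition calibrated to the moments of $R$), the second estimate already beats $\tfrac{\mu}{8}\sqrt{\theta/n}\,\|v\|_1$. Otherwise $v$ is peaky and one uses the first estimate, splitting further: if $\|v\|_\infty\ge\frac{\|v\|_1}{4\sqrt{\theta n}}$ the single-term bound gives exactly $\frac{\mu\theta}{2}\cdot\frac{\|v\|_1}{4\sqrt{\theta n}}=\frac{\mu}{8}\sqrt{\theta/n}\,\|v\|_1$; if not, $v$ is supported on many coordinates yet with small $\ell_2$-norm, and the summed-over-support form of the first estimate suffices, the hypotheses $2/n\le\theta\le\alpha/\sqrt n$ being used to control $(1-\theta)^{|\supp v|}$ and to relate $|\supp v|$ to $n$ and $\theta$ (in particular $\theta\le\alpha/\sqrt n$ forces the top $\lceil1/\theta\rceil$ coordinates to carry a definite fraction of $\|v\|_1$, and $\theta n\ge2$ makes the small-active-count regime contribute $\gtrsim\mu\theta\ge\mu\sqrt{\theta/n}$). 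I expect the main nuisance to be precisely this bookkeeping: the two estimates are conceptually forced, but matching them up so as to recover the explicit constant $1/8$ — rather than merely some absolute constant — requires keeping careful track of $\E R^2$, $\E R^4$ and of the ranges of $\theta$ and $n$.
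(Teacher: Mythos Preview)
Your plan is substantially different from the paper's proof, which is essentially two lines: apply the symmetrization inequality $\E|v^TZ|\ge\tfrac12\E\bigl|\sum_i v_i\varepsilon_i\chi_{ij}R_{ij}\bigr|$ (with $\varepsilon_i$ independent Rademachers), reducing to symmetric $R_{ij}$, and then invoke Lemma~16 of \cite{SpielmanWangWright13}, which already delivers $\tfrac{\mu}{4}\sqrt{\theta/n}\,\|v\|_1$ in the symmetric case. All the substance is delegated to the cited lemma.

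More importantly, your dichotomy has a genuine gap in its third branch ($\|v\|_\infty^2>\theta\|v\|_2^2$ together with $\|v\|_\infty<\|v\|_1/(4\sqrt{\theta n})$), and the issue is not merely bookkeeping for the constant $1/8$. The assertion that ``$\theta\le\alpha/\sqrt n$ forces the top $\lceil1/\theta\rceil$ coordinates to carry a definite fraction of $\|v\|_1$'' is false even under those constraints. Take $\theta=n^{-1/2}$ and let $v$ have one coordinate $v_1=n^{-1/4}/\log n$ and the remaining $n-1$ coordinates equal to $(1-v_1)/(n-1)\approx 1/n$. Then $\|v\|_1=1$, $\|v\|_\infty^2\approx\|v\|_2^2\approx n^{-1/2}/\log^2 n$, so $\|v\|_\infty^2>\theta\|v\|_2^2$ and $\|v\|_\infty<\tfrac14 n^{-1/4}$, placing $v$ squarely in your third branch; yet its top $\lceil\sqrt n\rceil$ coordinates carry only $O(n^{-1/4}/\log n)\to 0$. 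For this $v$ your full summed first estimate $\tfrac{\mu\theta}{2}\sum_k|v_k|(1-\theta)^{k-1}$ evaluates to $\sim \mu\, n^{-3/4}/\log n$, and your second estimate, after the H\"older step $\E\sqrt V\ge(\E V)^{3/2}/(\E V^2)^{1/2}$ with $V=\sum_k\chi_kv_k^2$, is also $\sim \mu\, n^{-3/4}/\log n$; both fall short of the target $\tfrac{\mu}{8}n^{-3/4}$ by a factor $\log n$. Thus your two estimates, as formulated, do not combine to give \emph{any} absolute constant. The loss occurs precisely in that second H\"older step: it is too crude when a single large $v_k^2$ dominates the first two moments of $V$ while the true $\E\sqrt V$ is carried by the many small coordinates. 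Closing the gap would require a sharper lower bound on $\E\sqrt V$, or a different decomposition altogether.
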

\begin{proof}
Let $\varepsilon_1,\ldots,\varepsilon_n$ be a sequence of i.i.d. Rademacher variables, independent of $Z$. By standard symmetrization inequalities (see e.g. Lemma 6.3. in \cite{LedouxTalagrand1991}),
\begin{displaymath}
\E|v^T R| = \E\Big|\sum_{i=1}^n v_i \chi_{ij}R_{ij}\Big| \ge \frac{1}{2}\E\Big|\sum_{i=1}^n v_i \varepsilon_i \chi_{ij}R_{ij}\Big|.
\end{displaymath}
The random variables $\varepsilon_i R_{ji}$ are symmetric and $\E|\varepsilon_i R_{ij}| = \mu$, so by Lemma 16 from \cite{SpielmanWangWright13}, the right-hand side above is bounded from below by $\frac{\mu}{8}\sqrt{\frac{\theta}{n}}\|v\|_1$.
\end{proof}

The next lemma is an improvement of Lemma 17 in \cite{SpielmanWangWright13}, which is crucial for obtaining Lemma \ref{le:4.2}.

\begin{lemma}\label{le:4.17} There exists an absolute constant $C$, such that the following holds for $p > Cn\log n$. Let $S \subseteq \{1,\ldots,p\}$ be a fixed subset of size $|S| < \frac{p}{4}$. Let $X_S$ be the submatrix of $X$, obtained by a restriction of $X$ to the columns indexed by $S$. With probability at least $1- p^{-8}$, for any $v \in \R^n$,
\begin{displaymath}
\|v^T X\|_1 - 2\|v^T X_S\|_1 > \frac{p\mu}{32}\sqrt{\frac{\theta}{n}} \|v\|_1.
\end{displaymath}
\end{lemma}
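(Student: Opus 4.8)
The plan is to reduce the lemma to Proposition~\ref{prop:empirical-process}, which I would apply twice: once to the full collection of columns of $X$ and once to the subcollection indexed by $S$. By $1$-homogeneity of both sides in $v$ it suffices to prove the inequality for $v$ with $\|v\|_1=1$, so $v\in B_1^n$. Write $Z_i=Xe_i$ ($1\le i\le p$) for the columns of $X$, so that $Z_1,\dots,Z_p$ are independent with $Z_i(j)=\chi_{ji}R_{ji}$, and the sub-Gaussian tail of $R_{ji}$ gives $\E e^{|R_{ji}|/M}\le 2$ for a universal constant $M$; hence the hypotheses of Proposition~\ref{prop:empirical-process} are satisfied both by $(Z_i)_{i=1}^p$ and by $(Z_i)_{i\in S}$ (with the same $M,\theta,n$ and with $p$ replaced by $|S|$ in the latter). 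Also, Lemma~\ref{le:4.16} yields $\E|v^TZ_1|\ge\frac{\mu}{8}\sqrt{\theta/n}$ for such $v$.

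Let $W$ be as in Proposition~\ref{prop:empirical-process} and let $W_S:=\sup_{v\in B_1^n}\big|\tfrac1{|S|}\sum_{i\in S}(|v^TZ_i|-\E|v^TZ_i|)\big|$. Using that the $Z_i$ are i.i.d. and that $|S|<p/4$, for every $v$ with $\|v\|_1=1$,
\begin{displaymath}
\frac1p\big(\|v^TX\|_1-2\|v^TX_S\|_1\big)\ \ge\ \Big(1-\frac{2|S|}p\Big)\E|v^TZ_1|-W-\frac{2|S|}p\,W_S\ \ge\ \frac{\mu}{16}\sqrt{\frac\theta n}-W-\frac{2|S|}p\,W_S .
\end{displaymath}
It then remains to bound $W$ and $\tfrac{|S|}p W_S$. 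Taking $q=9\log p$ — which exceeds $\max(2,\log n)$ once $C$ is large, since then $p\ge n$ — in Proposition~\ref{prop:empirical-process} for both families, and using $|S|\le p$ in the second case, one obtains that with probability at least $1-2p^{-9}\ge 1-p^{-8}$ both quantities are at most $\Delta:=\frac{Ce}{p}\big(\sqrt{9p\theta\log p}+9\log p\big)M$. A short computation — noting that $\log p/p$ is decreasing so it suffices to check $p=\lceil Cn\log n\rceil$, together with $\theta\ge 2/n$ and $\mu\ge 1/10$ — shows that $\Delta\le\frac{\mu}{128}\sqrt{\theta/n}$ provided $C$ is a large enough absolute constant; the relevant point is that the hypothesis $p\ge Cn\log n$ implies an inequality of the form $p\ge c\,n\log p$ (with $c$ absolute), by monotonicity of $p/\log p$. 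Inserting these bounds gives, on the good event and for every $v$ with $\|v\|_1=1$,
\begin{displaymath}
\frac1p\big(\|v^TX\|_1-2\|v^TX_S\|_1\big)\ \ge\ \frac{\mu}{16}\sqrt{\frac\theta n}-3\Delta\ \ge\ \frac{5\mu}{128}\sqrt{\frac\theta n}\ >\ \frac{\mu}{32}\sqrt{\frac\theta n},
\end{displaymath}
and multiplying by $p$ and rescaling in $v$ yields the claim.

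Once Proposition~\ref{prop:empirical-process} is available, there is no serious obstacle here — the whole probabilistic content sits in that proposition and in Lemma~\ref{le:4.16}. The one place that calls for a little care is that $|S|$ may be much smaller than $p$, so that $W_S$ by itself need not be small; this is harmless because only the rescaled combination $\tfrac{|S|}p W_S$ enters, and Proposition~\ref{prop:empirical-process} applied to the $|S|$-element subfamily, together with $|S|\le p$, controls exactly that combination by the same $\Delta$. The remaining minor point is the elementary passage from $p\ge Cn\log n$ to $p\ge c\,n\log p$ used to absorb $\Delta$.
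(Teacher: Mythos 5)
Your proof is correct and follows essentially the same route as the paper: two applications of Proposition~\ref{prop:empirical-process} with $q$ of order $\log p$ (to $X$ and to the subfamily indexed by $S$), combined with Lemma~\ref{le:4.16}, the bound $|S|<p/4$, and the observation that $p\ge Cn\log n$ forces the deviation terms below a constant fraction of $\frac{\mu}{8}\sqrt{\theta/n}$. The only differences are cosmetic (working with the normalized processes $W$, $W_S$ and general $|S|$ instead of the paper's reduction to $|S|=\lfloor p/4\rfloor$, and taking $q=9\log p$ rather than $8\log p$ for the probability bookkeeping).
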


\begin{proof}\label{le:17} Note first that by increasing the set $S$, we increase $\|v^TX_S\|_1$, so without loss of generality we can assume that $|S| = \lfloor p/4\rfloor$.
Apply Proposition \ref{prop:empirical-process} with the vectors $U_j = (R_{1j},\ldots,R_{nj})$ and $\chi_j = (\chi_{1j},\ldots,\chi_{nj})$ and $q=8\log p$. Note that our integrability assumptions on $R_{ij}$ imply \eqref{eq:exp_integr} with $M$ being a universal constant. Therefore, for some absolute constant $C$ and $p \ge Cn\log n$, with probability at least $1 - p^{-8}$ we have
\begin{align*}
\sup_{v \in B_1^n} \Big|\|v^T X\|_1 - \E \|v^T X\|_1 \Big| &\le C(\sqrt{p\theta \log p} + \log p) \le 2C\sqrt{p\theta \log p},\\
\sup_{v \in B_1^n} \Big|\|v^T X_S\|_1 - \E \|v^T X_S\|_1 \Big| &\le 2C\sqrt{p\theta \log p},
\end{align*}
where we used that for $C$ sufficiently large, $p/\log p \ge n \ge 1/\theta$.

Thus, by homogeneity, for all $v \in \R^n$,
\begin{align*}
\Big|\|v^T X\|_1 - \E \|v^T X\|_1 \Big| & \le 2C\sqrt{\theta p\log p}\|v\|_1,\\
\Big|\|v^T X_S\|_1 - \E \|v^T X_S\|_1 \Big|&\le 2C\sqrt{\theta p\log p}\|v\|_1.
\end{align*}
In particular this means that (using the notation of Proposition \ref{prop:empirical-process})
\begin{align*}
\|v^T X\|_1 &\ge  \E \|v^T X\|_1 - 2C\sqrt{\theta p \log p}\|v\|_1 = p \E |v^T Z_1| - 2C\sqrt{\theta p\log p}\|v\|_1,\\
2\|v^T X_S\|_1 &\le 2\E \|v^T X_S\|_1 +  4C\sqrt{\theta p\log p}\|v\|_1 = 2|S| \E |v^T Z_1| + 4C\sqrt{\theta p \log p}\|v\|_1,
\end{align*}
and so
\begin{displaymath}
\|v^T X\|_1 - 2\|v^T X_S\|_1 \ge (p - 2|S|) \E|v^T Z_1| - 6C\sqrt{\theta p \log p}\|v\|_1.
\end{displaymath}

Now, by Lemma \ref{le:4.16} and the assumed bound on the cardinality of $S$, we get
\begin{displaymath}
\|v^T X\|_1 - 2\|v^T X_S\|_1 \ge \Big(\frac{p\mu}{16}\sqrt{\frac{\theta}{n}}  - 6C\sqrt{\theta p \log p}\Big)\|v\|_1 >  \frac{p\mu}{32}\sqrt{\frac{\theta}{n}} \|v\|_1
\end{displaymath}
for $p > C'n\log n$, where $C'$ is another absolute constant.
\end{proof}

We are now in position to prove Lemma \ref{le:4.2}.

\begin{proof}[Proof of Lemma \ref{le:4.2}] We will show that for each $1\le i< j \le p$ the probability that $0< |I_{ij}| \le 1/(8\theta)$ and there exists a solution to \eqref{eq:changed-problem} not supported on $I_{ij}$ is bounded from above by $1/p^4$. By the union bound over all $i< j$, this implies the lemma.

Fix $i,j$ and let $S = \{l \in [p]\colon \exists_{k \in I_{ij}} X_{kl} \neq 0\}$. Denote by $\mathcal{F}_1$ the $\sigma$-field generated by $Xe_i$ and $Xe_j$.
Then $\mathcal{A} = \{0< |I_{ij}| \le 1/(8\theta)\} \in \mathcal{F}_1$. By independence, for each $k \notin \{i,j\}$, on the event $\mathcal{A}$,
\begin{displaymath}
\p(k \in S|\mathcal{F}_1) \le 1 - (1-\theta)^{|I_{ij}|} \le 1 - e^{-2\theta|I_{ij}|} \le 1 - e^{-\frac{1}{4}} < \frac{1}{4},
\end{displaymath}
where the second inequality holds if $\alpha$ is sufficiently small.

Thus, by independence of columns of $X$ and Hoeffding's inequality,
\begin{align}\label{eq:first}
\p\Big( |S\setminus \{i,j\}| \le \frac{p}{4}\Big|\mathcal{F}_1\Big) \ge 1  - e^{-cp}
\end{align}
for some universal constant $c > 0$.
Let $z_\ast$ be any solution of \eqref{eq:changed-problem} and denote by $z_0$ its orthogonal projection on $\R^{I_{ij}} = \{x\in \R^n\colon x_k = 0 \textrm{ for } k \notin I_{ij}\}$. Set also $z_1 = z_\ast - z_0$ and let $X_S, X_{S^c}$ be the matrices obtained from $X$ by selecting the columns labeled by $S$ and $S^c=[p]\setminus S$ respectively. By the triangle inequality, and the fact that $z_0^TX_{S^c} = 0$ , we get
\begin{align*}
\|z_\ast^T X\|_1 &= \|(z_0^T+z_1^T)X_S\|_1 +\|(z_0^T+z_1^T)X_{S^c}\|_1 \\
&\ge \|z_0^T X_S\|_1 - \|z_1^TX_S\|_1 +\|z_1^T X\|_1 - \|z_1^T X_{S}\|_1\\
&= \|z_0^T X\|_1 + (\|z_1^T X\|_1 - 2\|z_1^T X_{S}\|_1).
\end{align*}

Denote now by $X'$ the $|I_{ij}^c|\times (p-2)$ matrix obtained by restricting $X$ to the rows from $I_{ij}^c$ and columns from $[p]\setminus\{i,j\}$. Set also $S' = S\setminus\{i,j\}$. If, slightly abusing the notation, we identify $z_1$ with a vector from $\R^{|I_{ij}^c|}$, we have
\begin{displaymath}
\|z_1^T X\|_1 - 2\|z_1^T X_{S}\|_1 = \|z_1^T X'\|_1 - 2\|z_1^T X'_{S'}\|_1,
\end{displaymath}
where we used the fact that $z_1^T Xe_i = z_1^T X e_j = 0$.

Denote by $\mathcal{F}_2$ the $\sigma$-field generated by $Xe_i,Xe_j$ and the rows of $X$ labeled by $I_{ij}$ (note that $I_{ij}$ is itself random, but this will not be a problem in what follows). The random set $S$ is measurable with respect to $\mathcal{F}_2$. Moreover, due to independence and identical distribution of the entries of $X$, conditionally on $\mathcal{F}_2$ the matrix $X'$ still follows the Bernoulli-Subgaussian model with parameter $\theta$. Therefore, by Lemma \ref{le:4.17}, if $C$ is large enough, then on $\{|S'| \le p/4\}$ we have
\begin{align*}
\p\Big(\textrm{for all} \; v\in \R^{|I_{ij}^c|}:\quad\|v^T X'\|_1 - 2\|v^T X'_{S'}\|_1 \ge \frac{p\mu}{32}\sqrt{\frac{\theta}{|I_{ij}^c|}}\|v\|_1\Big|\mathcal{F}_2\Big) \ge 1 - p^{-8}.
\end{align*}

Note that by the definition of $z_0$, we have $b_{ij}^T z_0 = b_{ij}^T z = 1$, therefore $z_0$ is a feasible candidate for the solution of the optimization problem \eqref{eq:changed-problem}. Thus, we have $\|z_1^T X'\|_1 - 2\|z_1^T X'_{S'}\|_1 \le 0$ and as a consequence, on the event $\{|S'|\le p/4\}$,
\begin{align}\label{eq:second}
\p(\textrm{for some solution $z_\ast$ to \eqref{eq:changed-problem},}\; z_1 \neq 0|\mathcal{F}_2) \le p^{-8}.
\end{align}
Thus, denoting $\mathcal{B} = \{\textrm{for some solution $z_\ast$ to \eqref{eq:changed-problem},}\;z_1 \neq 0 \; \textrm{and}\; 0< |I_{ij}^c| < 1/(8\theta)\}$, we get by \eqref{eq:first} and \eqref{eq:second},
\begin{align*}
\p(\mathcal{B}) &\le \p(\mathcal{B}\cap\{|S'|> p/4\}) + \E \p(\mathcal{B}|\mathcal{F}_2)\ind{|S'|\le p/4}\\
&\le \E \p(|S'|> p/4|\mathcal{F}_1)\Ind{\mathcal{A}} + p^{-8}\\
&\le e^{-cp} + p^{-8} \le p^{-4}
\end{align*}
for $p > Cn\log n$ with a sufficiently large absolute constant $C$.
\end{proof}

\medskip
\noindent{\textbf{Step 3.} With high probability $z_\ast = \lambda e_k$ for $k = \argmax_{1\le l\le n} |b_{ij}(l)|$.}
\medskip

At this step one proves the following lemma (Lemma 12 in \cite{SpielmanWangWright13}). Since no changes with respect to the original argument are required (we do not use Proposition \ref{prop:empirical-process} here), we do not reproduce the proof and refer the Reader to \cite{SpielmanWangWright13} for details. We remark that although the lemma is formulated in \cite{SpielmanWangWright13} for symmetric variables, the symmetry assumption is not used in its proof.

Below, by $|b|^\downarrow_1 \ge |b|^\downarrow_2 \ge \ldots \ge |b|^\downarrow_n$, we denote the nonincreasing rearrangement of the sequence $|b_1|,\ldots,|b_n|$, while for $J \subseteq [n]$, $X^J$ denotes the matrix obtained from $X$ by selecting the rows indexed by the set $J$.

\begin{lemma}\label{le:12} There exist two positive constants $c_1,c_2$ such that the following holds. For any $\gamma > 0$ and $s \in \Z_+$, such that $\theta s < \gamma/8$ and $p$ such that
\begin{displaymath}
p \ge \max\Big\{\frac{c_1s\log n}{\theta\gamma^2}, n\Big\},\quad \textrm{and} \quad \frac{p}{\log p} \ge \frac{c_2}{\theta\gamma^2},
\end{displaymath}
with probability at least $1 - 4p^{-10}$, the random matrix $X$ has the following property.

(P2) For every $J\subseteq [n]$ with $|J| = s$ and every $b \in \R^s$, satisfying $\frac{|b|^\downarrow_1}{|b|^\downarrow_2}\le 1-\gamma$, the solution to the restricted problem
\begin{align}\label{eq:restricted-optimization}
\textrm{minimize } \|z^T X^J\|_1 \textrm{ subject to } b^T z = 1,
\end{align}
is unique, 1-sparse, and is supported on the index of the largest entry of $b$.
\end{lemma}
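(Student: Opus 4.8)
The plan is to use a first‑order / dual‑certificate argument, comparing the candidate solution $z_0 := e_m/b_m$ — where $b_m$ is the (unique, by the assumption $|b|^\downarrow_2\le(1-\gamma)|b|^\downarrow_1$) largest‑in‑modulus entry of $b$ — against every other feasible point. The case $s=1$ being immediate, assume $s\ge 2$ and relabel so that $J=\{1,\dots,s\}$. First I would write an arbitrary feasible $z$ as $z_0+w$ with $b^Tw=0$, split $[p]=L_1\cup L_0$ according to whether $X_{ml}\ne 0$ or $X_{ml}=0$, apply the elementary inequality $|a+h|\ge|a|+\operatorname{sgn}(a)h$ (for $a\ne 0$) to the columns in $L_1$ and use $z_0^T(X^J)_l=0$ on $L_0$, to obtain
\[
\|z^TX^J\|_1-\|z_0^TX^J\|_1\ \ge\ D(w)\ :=\ A(\tilde w)+\operatorname{sgn}(b_m)\bigl(w_mN_m+\langle\xi,\tilde w\rangle\bigr),
\]
where $\tilde w=(w_k)_{k\ne m}$, $N_m=\sum_l|X_{ml}|$, $\xi_k=\sum_{l\in L_1}\operatorname{sgn}(X_{ml})X_{kl}$ for $k\ne m$, and $A(\tilde w)=\sum_{l\in L_0}\bigl|\tilde w^T(X^{J\setminus\{m\}})_l\bigr|$. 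The lemma then reduces to showing that, with the stated probability, $X$ is such that $D(w)>0$ for every admissible $b$ and every $w\ne 0$ with $b^Tw=0$; this yields uniqueness, $1$‑sparsity and support at $\argmax_l|b(l)|$ all at once.

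The second step would be to turn $D(w)>0$ into a per‑coordinate statement. Substituting $w_m=-\langle\tilde b,\tilde w\rangle/b_m$ ($\tilde b=(b_k)_{k\ne m}$) rewrites $D(w)=A(\tilde w)-\langle c^\ast,\tilde w\rangle$ with $c^\ast_k=\tfrac{N_m}{|b_m|}b_k-\operatorname{sgn}(b_m)\xi_k$, and the crucial structural point is the weighted‑$\ell_1$ lower bound $A(\tilde w)\ge\sum_{k\ne m}\tilde N_k|\tilde w_k|$, where $\tilde N_k$ is the sum of $|X_{kl}|$ over the columns $l$ with $X_{ml}=0$ in which row $k$ is the \emph{only} nonzero row among $J\setminus\{m\}$ (such a column contributes exactly $|\tilde w_k||X_{kl}|$ to $A$, every other column a nonnegative amount). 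Hence $D(w)\ge\sum_{k\ne m}(\tilde N_k-|c^\ast_k|)|\tilde w_k|$, so it suffices that $\tilde N_k>|c^\ast_k|$ for every $k\ne m$; and since $|b_k|\le(1-\gamma)|b_m|$ gives $|c^\ast_k|\le(1-\gamma)N_m+|\xi_k|$, it is enough to establish the scalar inequality
\[
\tilde N_k-(1-\gamma)N_m\ >\ |\xi_k|\qquad\text{for every }k\in J\setminus\{m\}.
\]
This event depends only on $(J,m)$, not on the remaining entries of $b$, so I would prove it for each of the $\le s\binom{n}{s}$ triples $(J,m,k)$ with failure probability $e^{-q}$, for $q$ of order $s\log n+\log p$, and finish by a union bound.

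For the scalar inequality I would use concentration directly (no empirical‑process machinery is needed for this step). Both $N_m$ and $\tilde N_k$ are sums of $p$ i.i.d.\ nonnegative summands dominated by $|X_{kl}|=\chi_{kl}|R_{kl}|$, hence subexponential with variance $O(\theta)$ per term; moreover $\E N_m=\theta\mu p$, while $\E\tilde N_k=\theta\mu p\,(1-\theta')^{s-1}$ with $\theta'=\theta\,\p(R\ne 0)\le\theta$, so that $(1-\theta')^{s-1}\ge 1-s\theta>1-\gamma/8$ by the hypothesis $\theta s<\gamma/8$, which yields the guaranteed expected gap $\E\tilde N_k-(1-\gamma)\E N_m\ge\tfrac{7}{8}\gamma\theta\mu p$. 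Bernstein's inequality bounds $|N_m-\E N_m|$ and $|\tilde N_k-\E\tilde N_k|$ by $C(\sqrt{\theta pq}+q)$; conditioning on row $m$ (which fixes the signs and the set $L_1$, of size $\le 2\theta p$ with probability $\ge 1-e^{-c\theta p}$), $\xi_k$ is a centered sum of $\le 2\theta p$ subexponential variables, so it too is $C(\sqrt{\theta pq}+q)$. Assembling these gives
\[
\tilde N_k-(1-\gamma)N_m-|\xi_k|\ \ge\ \tfrac{7}{8}\gamma\theta\mu p-4C\bigl(\sqrt{\theta pq}+q\bigr),
\]
and the two hypotheses $p\ge c_1 s\log n/(\theta\gamma^2)$ and $p/\log p\ge c_2/(\theta\gamma^2)$ are precisely what forces $\theta p\gtrsim q/\gamma^2$ and $\gamma\theta\mu p\gtrsim q$ (recall $\mu\ge 1/10$, $q\asymp s\log n+\log p$), so $\gamma\theta\mu p$ dominates the error term and the right‑hand side is positive with probability $\ge 1-2e^{-q}-e^{-c\theta p}$. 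Since $\theta p\ge c_2\log p/\gamma^2$ makes $e^{-c\theta p}$ negligible, a union bound over the triples $(J,m,k)$ then gives the $1-4p^{-10}$ of the statement.

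I expect the main obstacle — and the insight the proof hinges on — to be exactly this last comparison. $\tilde N_k$ and $N_m$ both concentrate tightly around the \emph{same} quantity $\theta\mu p$, so bounding $A(\tilde w)$ below by a dimension‑dependent $\ell_1$‑bound $a\|\tilde w\|_1$ (e.g.\ via Lemma \ref{le:4.16}) and comparing $a$ with $(1-\gamma)N_m$ is hopeless for small $\gamma$, since $a$ is itself of order $\theta\mu p$. The weighted‑$\ell_1$ structure of $A$ is what lets one instead compare $\tilde N_k$ with $N_m$ \emph{directly}: the only genuinely positive slack is $\bigl((1-\theta)^{s-1}-(1-\gamma)\bigr)\theta\mu p\approx\gamma\theta\mu p$, produced by playing the "no‑collision" factor $(1-\theta)^{s-1}\approx 1$ (which is where $\theta s<\gamma/8$ enters) against the dominance factor $1-\gamma$, and this $\gamma\theta\mu p$ survives the $\sqrt{\theta pq}$‑scale fluctuations exactly when $\theta p\gtrsim q/\gamma^2$, which the two lower bounds on $p$ provide. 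The remaining work — the Bernstein estimates, and the union bound over the $\binom{n}{s}$ sets $J$ (which is what forces the $s\log n$ term in $q$, hence the $p\gtrsim s\log n/(\theta\gamma^2)$ scaling) — is routine.
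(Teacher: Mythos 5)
Your proposal is correct: the perturbation $z=z_0+w$ around $z_0=e_m/b_m$, the subgradient inequality on the columns where row $m$ is nonzero, the weighted-$\ell_1$ lower bound $A(\tilde w)\ge\sum_{k\ne m}\tilde N_k|\tilde w_k|$ via the columns where $k$ is the only nonzero row of $J$, and the Bernstein-plus-union-bound comparison $\tilde N_k>(1-\gamma)N_m+|\xi_k|$ (which is exactly where $\theta s<\gamma/8$ and the two lower bounds on $p$ enter) together give Lemma \ref{le:12} with the stated probability. Note that this paper does not prove the lemma itself but quotes Lemma 12 of Spielman, Wang and Wright, and your argument follows essentially the same route as that original proof (per-coordinate comparison of row mass, cross-correlation and ``exclusive-support'' mass for each triple $(J,m,k)$), so there is nothing genuinely different to compare.
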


\medskip
\noindent{\textbf{Step 4.} Conclusion of the proof.}

\medskip

Set $s = 12\theta n +1$. Our first goal is to prove that with probability at least $1-1/p^2$, for all $k \in [n]$, there exist $i,j\in [p]$, $i\neq j$ such that the vector $b = Xe_i + Xe_j$ satisfies the assumptions of Lemma \ref{le:12}, $|b|^\downarrow_1 = |b_k|$ and $I_{ij} := (\supp Xe_i)\cup(\supp Xe_j)$ satisfies $0<  |I_{ij}| \le 1/(8\theta)$, which will allow us to take advantage of Lemma \ref{le:4.2}.

Note that we have
\begin{displaymath}
\E R_{ij}^2 \le 4\int_0^\infty te^{-t^2/2}dt = 4.
\end{displaymath}
Since $\E|R_{ij}| = \mu \ge \frac{1}{10}$, by the Paley-Zygmund inequality (see e.g. Corollary 3.3.2. in \cite{delaPenaGine1999}), we have
\begin{displaymath}
\p(|R_{ij}| \ge \frac{1}{20}) \ge \frac{3}{4}\frac{(\E |R_{ij}|)^2}{\E R_{ij}^2} \ge c_0
\end{displaymath}
for some universal constant $c_0>0$. In particular $\p(|R_{ij}| = 0) < 1 - \frac{c_0}{2}$.
Let $q$ be any $(1-c_0/(2s))$-quantile of $|R_{ij}|$, i.e. $\p(|R_{ij}| \le q) \ge (1-c_0/(2s))$ and $\p(|R_{ij}| \ge q) \ge c_0/(2s)$. In particular, since $s \ge 1$, we get $q > 0$.
We have $\p(R_{ij} \ge q) \ge c_0/(4s)$ or $\p(R_{ij} \le -q) \ge c_0/(4s)$. Let us assume that $\p(R_{ij} \ge q) \ge c_0/(4s)$, the other case is analogous.

Define the event $\mathcal{E}_{ki}$ as
\begin{align*}
\mathcal{E}_{ki} = \Big\{\chi_{ki} = 1,\;
|\{r \in [n]\setminus\{k\}\colon \chi_{ri} = 1\}| \le (s-1)/2,\;
R_{ki} \ge q,\;
\forall_{r\neq k}\, \chi_{ri} = 1 \implies |R_{ri}| \le q\Big\}
\end{align*}

We will assume that $p \ge 2Cn\log n$ for some numerical constant $C$ to be fixed later on. For $k\in [n]$, consider the events
\begin{displaymath}
\mathcal{A}_k = \bigcup_{1\le i\le \lfloor p/2\rfloor} \mathcal{E}_{ki}
\end{displaymath}
and
\begin{displaymath}
\mathcal{B}_i = \bigcup_{1\le i\le \lfloor p/2\rfloor} \bigcup_{\lfloor p/2\rfloor < j \le p} \mathcal{E}_{ki}\cap\mathcal{E}_{kj}\cap\Big\{\{l\in [n]\colon \chi_{li} = \chi_{lj} = 1\} = \{k\}\Big\}.
\end{displaymath}
We will first show that for all $k\in [n]$,
\begin{align}\label{eq:first_property}
\p(\mathcal{A}_k) \ge 1 - \frac{1}{p^4},
\end{align}
which we will use to prove that
\begin{align}\label{eq:second_property}
\p(\mathcal{B}_k) \ge 1 -\frac{1}{p^3}.
\end{align}
Let us start with the proof of \eqref{eq:first_property}. Set $\mathcal{B}_{ki} =\{
|\{r \in [n]\setminus\{k\}\colon \chi_{rk} = 1\}| \le (s-1)/2\}$. By independence we have
\begin{align*}
\p(\mathcal{E}_{ki}) &= \p(\chi_{ki} = 1)\p(R_{ki} \ge q)\p(\mathcal{B}_{ki})\p(\forall_{r\neq k}\, \chi_{ri} = 1 \implies |R_{ri}| \le q|\mathcal{B}_{ki})\\
&\ge \theta \frac{c_0}{4s}\Big(1-\frac{2\theta (n-1)}{s-1}\Big)\Big(1-\frac{c_0}{2s}\Big)^{(s-1)/2},
\end{align*}
where to estimate $\p(B_{ki})$ we used Markov's inequality. The right hand side above is bounded from below by $c_1/n$ for some universal constant $c_1$. Therefore if the constant $C$ is large enough, we obtain
\begin{displaymath}
\p\Big(\bigcap_{1\le i\le \lfloor p/2\rfloor} \mathcal{E}_{ki}^c\Big) \le \Big(1 - \frac{c_1}{n}\Big)^{\lfloor p/2\rfloor} \le \exp(-c_1 p/(4n)) \le \exp(-4\log p) = \frac{1}{p^4},
\end{displaymath}
where we used the inequality $p/\log p \ge 16c_1^{-1}n$ for $p \ge Cn\log n$. We have thus established \eqref{eq:first_property}.

Let us now pass to \eqref{eq:second_property}. Denote by $\mathcal{F}_1$ the $\sigma$-field generated by $\chi_{ki},R_{ki}, k \in [n], 1\le i \le \lfloor p/2\rfloor$.

For  $\omega \in \mathcal{A}_k$ define $i_{\min}(\omega) = \min \{1\le i \le \lfloor p/2\rfloor \colon \omega \in \mathcal{E}_{ki}\}$. Note that on $\mathcal{A}_k$,
\begin{align*}
\p(\mathcal{B}_k|\mathcal{F}_1) &\ge \p \Big(\bigcup_{\lfloor p/2\rfloor < j \le p} (\mathcal{E}_{kj} \cap \Big\{\{l\in [n]\colon \chi_{li_{\min}} = \chi_{lj} = 1\}=\{k\}\Big\}\Big|\mathcal{F}_1\Big)
\end{align*}
Define
\begin{displaymath}
\mathcal{C}_{kj} = \{|\{r \in [n]\setminus\{k\}\colon \chi_{rj} = 1\}| \le (s-1)/2,\}\cap \Big\{\{l\in [n]\colon \chi_{li_{\min}} = \chi_{lj} = 1\}=\{k\}\Big\}.
\end{displaymath}
Similarly as in the argument leading to \eqref{eq:first_property}, for fixed $j$, using the independence of the variables $\chi_{lm},R_{lm}$ we obtain
\begin{align*}
&\p\Big(\mathcal{E}_{kj} \cap \Big\{\{l\in [n]\colon \chi_{li_{\min}} = \chi_{lj} = 1\}=\{k\}\Big\}\Big|\mathcal{F}_1\Big)\\
&= \p(R_{kj} \ge q)\E\Big(\Ind{\mathcal{C}_{kj}}\p(\forall_{r\neq k}\, \chi_{rj} = 1 \implies |R_{rj}| \le q|\mathcal{C}_{kj},\mathcal{F}_1)|\mathcal{F}_1\Big)\\
&\ge \p(R_{kj} \ge q) \E \Big(\Ind{\mathcal{C}_{kj}}\Big(1-\frac{c_0}{2s}\Big)^{\frac{s-1}{2}}\Big|\mathcal{F}_1\Big)\\
&= \p(R_{kj} \ge q) \Big(1-\frac{c_0}{2s}\Big)^{\frac{s-1}{2}} \p(\mathcal{C}_{kj}|\mathcal{F}_1) \\
&\ge \frac{c_0}{4s} \Big(1-\frac{c_0}{2s}\Big)^\frac{s-1}{2} \times \\
&\Big(\p\Big(\{l\in [n]\colon \chi_{li_{\min}} = \chi_{lj} = 1\}=\{k\}\Big|\mathcal{F}_1\Big) - \p\Big(\chi_{kj} = 1,\;|\{r \in [n]\setminus\{k\}\colon \chi_{rj} = 1\}| > \frac{s-1}{2}\Big|\mathcal{F}_1\Big)\Big)\\
&\ge \frac{c_0}{4s}\Big(1-\frac{c_0}{4s}\Big)^\frac{s-1}{2} \Big(\theta (1-\theta)^{(s-1)/2} - \theta \frac{2\theta(n-1)}{s-1}\Big).
\end{align*}
Now recall that $\theta \le \frac{\alpha}{n}$ for some universal constant $\alpha$. If $\alpha$ is small enough then $1 -\theta \ge e^{-2\theta}$ and
\begin{displaymath}
(1-\theta)^{(s-1)/2} \ge e^{-\theta (s-1)} = e^{-12\theta^2 n}\ge e^{-12\alpha^2} \ge \frac{1}{3}.
\end{displaymath}
Since $\frac{2\theta(n-1)}{s-1} \le \frac{1}{6}$, this implies that
\begin{displaymath}
\p\Big(\mathcal{E}_{kj} \cap \Big\{\{l\in [n]\colon \chi_{lj_{\min}} = \chi_{lj} = 1\}=\{k\}\Big\}\Big|\mathcal{F}_1\Big) \ge \frac{c_2}{n}
\end{displaymath}
for some positive universal constant $c_2$. Since the events $\mathcal{E}_{kj} \cap \Big\{\{l\in [n]\colon \chi_{lj_{\min}} = \chi_{lk} = 1\}=\{k\}\Big\}$, $\lfloor p/2\rfloor < k \le p$ are conditionally independent, given $\mathcal{F}_1$, we obtain that on $\mathcal{A}_k$,
\begin{displaymath}
\p(\mathcal{B}_k^c|\mathcal{F}_1) \le \Big(1- \frac{c_2}{n}\Big)^{\lfloor p/2\rfloor} \le \frac{1}{p^4},
\end{displaymath}
provided $C$ is a sufficiently large universal constant.
Now, using \eqref{eq:first_property}, we get
\begin{displaymath}
\p(\mathcal{B}_k) \ge \E \Ind{\mathcal{A}_k}\p(\mathcal{B}_k) \ge \p(A_k)\Big(1-\frac{1}{p^4}\Big) \ge \Big(1-\frac{1}{p^4}\Big)^2 \ge 1 - \frac{1}{p^3},
\end{displaymath}
proving \eqref{eq:second_property}.

Taking the union bound over $k \in [n]$, we get
\begin{displaymath}
\p(\bigcap_{1\le k\le n} \mathcal{B}_k) \ge 1 - \frac{1}{p^2}.
\end{displaymath}

Set $\gamma = 1/2$ and observe that if $C$ is large enough and $\alpha$ small enough, then the assumptions of Lemma \ref{le:4.2} and Lemma \ref{le:12} are satisfied. Moreover $s = 12\theta n + 1 \le \frac{1}{8\theta}$. Recall the properties P1 and P2 considered in the said lemmas. Consider the event $\mathcal{A} = \bigcap_{1\le k\le n} \mathcal{B}_k \cap \{\textrm{properties P1 and P2 hold}\}$ and note that $\p(\mathcal{A}) \ge 1 - \frac{1}{p}$. On the event $\mathcal{A}$, for every $k$, there exist $1\le i< j\le p$, such that
\begin{itemize}
\item $1\le |I_{ij}| \le s \le 1/(8\theta)$,

\item the largest entry of $b$ (in absolute value) equals $b_k\ge 2q > 0$ whereas the remaining entries do not exceed $q$,
\end{itemize}

In particular, by property P1 we obtain that any solution $z_\ast$ to the problem \eqref{eq:changed-problem} satisfies $\supp z_\ast \subseteq I_{ij}$. Therefore for some (any) $J \supseteq I_{ij}$ with $|J| = s$, we obtain (identifying vectors supported on $J$ with their restrictions to $J$), that $z_\ast$ is in fact a solution to the restricted problem \eqref{eq:restricted-optimization} with $b=b_{ij}$, which by property P2 implies that $z_\ast = \lambda e_k$ for some $\lambda \neq 0$.

According to the discussion at the beginning of Step 1, this means that the solution $w_\ast$ to \eqref{eq:optimization-problem} satisfies $w_\ast^TY= \lambda e_k^T X$, i.e. the algorithm, when analyzing the vector $b_{ij}$, will add a multiple of the $k$-th row of $X$ to the collection $S$.

This ends the proof of Theorem \ref{thm:main}.

\section{Proof of Proposition \ref{prop:empirical-process} \label{sec:proof-of-proposition}}
The first tool we will need is the classical Bernstein's inequality (see e.g. Lemma 2.2.11 in \cite{vanderVaartWellner1996}).
\begin{lemma}[Bernstein's inequality]
Let $Y_1,\ldots, Y_p$ be independent mean zero random variables such that for some constants $M,v$ and every integer $k \ge 2$, $\E |Y_i|^k \le k!M^{k-2}v/2$. Then, for every $t > 0$,
\begin{displaymath}
\p\Big(\Big|\sum_{i=1}^p Y_i\Big|\ge t\Big) \le 2\exp\Big(-\frac{t^2}{2(pv + Mt)}\Big).
\end{displaymath}
As a consequence, for every $q \ge 2$,
\begin{align}\label{eq:Bernstein-moments}
\Big\|\sum_{i=1}^p Y_i\Big\|_q \le C(\sqrt{qpv} + qM),
\end{align}
where $C$ is a universal constant.
\end{lemma}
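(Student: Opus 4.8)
The plan is to establish the tail bound by the classical exponential-moment (Chernoff) method and then to deduce the moment inequality \eqref{eq:Bernstein-moments} by integrating the tail. First I would control the Laplace transform of a single summand. Fix $0 \le \lambda < 1/M$, write $S = \sum_{i=1}^p Y_i$, and expand the exponential, using $\E Y_i = 0$ to kill the linear term and bounding $\E Y_i^k \le \E|Y_i|^k \le \frac12 k! M^{k-2} v$ term by term for $k \ge 2$ (the sign issue for odd $k$ is harmless since $\lambda^k \ge 0$). This gives
\[
\E e^{\lambda Y_i} \le 1 + \frac{v\lambda^2}{2}\sum_{k\ge 0}(\lambda M)^k = 1 + \frac{v\lambda^2}{2(1-\lambda M)} \le \exp\Big(\frac{v\lambda^2}{2(1-\lambda M)}\Big),
\]
where the geometric series converges because $\lambda M < 1$ and the last step uses $1+x\le e^x$. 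By independence, $\E e^{\lambda S} \le \exp\big(\tfrac{pv\lambda^2}{2(1-\lambda M)}\big)$.

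Next, Markov's inequality gives $\p(S \ge t) \le \exp\big(-\lambda t + \tfrac{pv\lambda^2}{2(1-\lambda M)}\big)$ for every admissible $\lambda$. The decisive step is the choice of $\lambda$: taking $\lambda = t/(pv + Mt)$, which indeed lies in $[0,1/M)$, a direct substitution collapses the exponent to exactly $-t^2/\big(2(pv+Mt)\big)$. Running the same argument for the variables $-Y_i$, whose moments satisfy the identical hypothesis, and summing the two one-sided bounds yields the stated two-sided estimate with the prefactor $2$.

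Finally, for \eqref{eq:Bernstein-moments} I would convert the tail into a moment bound. From $pv + Mt \le 2\max(pv, Mt)$ one obtains $\tfrac{t^2}{2(pv+Mt)} \ge \tfrac14 \min\big(\tfrac{t^2}{pv}, \tfrac{t}{M}\big)$, hence the clean split
\[
\p(|S| \ge t) \le 2e^{-t^2/(4pv)} + 2e^{-t/(4M)}.
\]
Plugging this into $\E|S|^q = \int_0^\infty q t^{q-1}\p(|S|\ge t)\,dt$ and evaluating the two resulting integrals in terms of Gamma functions, the subgaussian term contributes $\lesssim (pv)^{q/2}\Gamma(q/2+1)$ and the subexponential term $\lesssim M^q q!$; taking $q$-th roots and using the Stirling bounds $\Gamma(q/2+1)^{1/q}\lesssim \sqrt q$ and $(q!)^{1/q}\lesssim q$ gives $\|S\|_q \le C(\sqrt{pvq} + Mq)$, as claimed.

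I do not anticipate a genuine obstacle, as the argument is entirely classical. The only points requiring care are the optimization over $\lambda$ that produces the precise denominator $pv + Mt$ in the exponent (a suboptimal $\lambda$ still yields exponential decay but degrades the constants), and the routine but slightly fiddly passage from tails to $L^q$ norms via the Gamma-integral estimates.
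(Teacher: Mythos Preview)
Your argument is correct and entirely standard: the Chernoff bound with the moment hypothesis summed as a geometric series, the explicit minimizer $\lambda = t/(pv+Mt)$, and the tail-to-$L^q$ conversion via Gamma integrals all go through exactly as you describe. There is nothing to compare against, however, because the paper does not prove this lemma; it simply quotes it as the classical Bernstein inequality with a reference to \cite{vanderVaartWellner1996} (Lemma~2.2.11 there) and records the moment form \eqref{eq:Bernstein-moments} as a routine consequence. Your write-up supplies the proof the paper omits.
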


Another (also quite standard) tool we will rely on is the contraction principle for empirical processes due to Talagrand (see Theorem 4.12. in \cite{LedouxTalagrand1991}).
\begin{lemma}[Talagrand's contraction principle]\label{le:contraction} Let $F\colon \R_+ \to \R_+$ be convex and increasing. Let further $\varphi\colon \R \to \R$ be a 1-Lipschitz function such that $\varphi(0)=0$. For every bounded subset $T$ of $\R^n$, if $\varepsilon_1,\ldots,\varepsilon_n$ are i.i.d. Rademacher variables, then
\begin{displaymath}
\E F\Big(\sup_{t\in T} \frac{1}{2} \Big|\sum_{i=1}^n \varphi(t_i)\varepsilon_i\Big|\Big) \le \E F\Big(\sup_{t\in T}\Big|\sum_{i=1}^n t_i\varepsilon_i\Big|\Big)
\end{displaymath}
\end{lemma}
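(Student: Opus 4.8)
The plan is to reproduce the classical argument of Ledoux and Talagrand, reducing everything to a single-coordinate comparison and removing the absolute value only at the very end. Throughout I may assume $0\in T$: since $\varphi(0)=0$, adjoining the zero vector adds the term $0$ to every sum $\sum_i\varphi(t_i)\varepsilon_i$ and $\sum_i t_i\varepsilon_i$, so it leaves both of the two-sided suprema $\sup_t|\cdots|$ in the statement unchanged, while guaranteeing that all the one-sided suprema appearing below are nonnegative, so that $F$, defined on $\R_+$, is always evaluated at a legitimate point.

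First I would prove the one-sided, one-coordinate lemma: for any function $\Phi\colon T\to\R$,
\[
\E_{\varepsilon_1}F\Big(\sup_{t\in T}\big[\Phi(t)+\varphi(t_1)\varepsilon_1\big]\Big)\le \E_{\varepsilon_1}F\Big(\sup_{t\in T}\big[\Phi(t)+t_1\varepsilon_1\big]\Big).
\]
Averaging over $\varepsilon_1\in\{-1,+1\}$ turns this into the deterministic inequality $F(A)+F(B)\le F(A')+F(B')$, where $A=\sup_s[\Phi(s)+\varphi(s_1)]$, $B=\sup_t[\Phi(t)-\varphi(t_1)]$, and $A',B'$ are the analogous suprema with $\varphi$ removed. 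Choosing near-maximizers $s,t$ and using the contraction bound $\varphi(s_1)-\varphi(t_1)\le|s_1-t_1|$, a split into the cases $s_1\ge t_1$ and $s_1<t_1$ yields $A+B\le A'+B'$, while the pointwise bound $|\varphi(x)|\le|x|$ yields $\max(A,B)\le\max(A',B')$. The remaining, purely analytic core is the key lemma: for convex increasing $F$, the two inequalities $A+B\le A'+B'$ and $\max(A,B)\le\max(A',B')$ already force $F(A)+F(B)\le F(A')+F(B')$. I would prove this by ordering $A\ge B$ and $A'\ge B'$, noting $A\le A'$, and comparing the increment $F(A')-F(A)=\int_A^{A'}F'$ against $F(B)-F(B')$ (the case $B'\ge B$ being trivial) using the monotonicity of $F'$ together with the bound $A'-A\ge B-B'$ extracted from the sum condition.

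Next I would iterate this lemma coordinate by coordinate: conditioning on $(\varepsilon_i)_{i\ne k}$ and absorbing $\sum_{i\ne k}\varphi(t_i)\varepsilon_i$ (together with the already-replaced terms $t_i\varepsilon_i$) into the role of $\Phi$, one replaces $\varphi(t_k)$ by $t_k$ for each $k$ in turn. This produces the full one-sided contraction inequality
\[
\E F\Big(\sup_{t\in T}\sum_i\varphi(t_i)\varepsilon_i\Big)\le \E F\Big(\sup_{t\in T}\sum_i t_i\varepsilon_i\Big),
\]
valid for every $1$-Lipschitz $\varphi$ with $\varphi(0)=0$.

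Finally I would remove the absolute value and produce the factor $1/2$. Writing $A=\sup_t\sum_i\varphi(t_i)\varepsilon_i$ and $A_-=\sup_t\big(-\sum_i\varphi(t_i)\varepsilon_i\big)$, both nonnegative since $0\in T$, we have $\sup_t|\sum_i\varphi(t_i)\varepsilon_i|=\max(A,A_-)\le A+A_-$, whence by monotonicity and convexity of $F$,
\[
F\Big(\tfrac12\sup_t\big|\sum_i\varphi(t_i)\varepsilon_i\big|\Big)\le F\Big(\tfrac12 A+\tfrac12 A_-\Big)\le \tfrac12 F(A)+\tfrac12 F(A_-).
\]
Taking expectations and applying the one-sided inequality once to $\varphi$ and once to the contraction $-\varphi$ bounds both $\E F(A)$ and $\E F(A_-)$ by $\E F(\sup_t\sum_i t_i\varepsilon_i)\le \E F(\sup_t|\sum_i t_i\varepsilon_i|)$, which is exactly the claim. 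The main obstacle is the single-coordinate step: the convexity lemma $F(A)+F(B)\le F(A')+F(B')$ and the careful verification of its two hypotheses from the contraction property are the entire substance of Talagrand's principle, whereas the iteration over coordinates and the final desymmetrization are essentially bookkeeping.
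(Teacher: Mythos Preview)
Your argument is correct and is, in fact, the classical Ledoux--Talagrand proof. Note, however, that the paper does not give its own proof of this lemma at all: it is stated as a tool and simply attributed to Theorem~4.12 in \cite{LedouxTalagrand1991}. So there is nothing in the paper to compare your proof against beyond that citation, and your write-up is essentially a faithful reconstruction of the cited argument (single-coordinate comparison via the convexity lemma, iteration over coordinates, then desymmetrization to recover the factor $1/2$). One small remark: in the single-coordinate step you write ``for any function $\Phi$'', but for $F$ to be applicable you need the suprema to be nonnegative; this is exactly what your reduction to $0\in T$ guarantees, since in the iteration $\Phi(0)=0$, so it would be cleaner to state the one-coordinate lemma under the standing assumption $\sup_t\Phi(t)\ge 0$.
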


\begin{proof}[Proof of Proposition \ref{prop:empirical-process}]
Let $\varepsilon_1,\ldots, \varepsilon_p$ be i.i.d. Rademacher variables, independent of the sequences $(U_i)$, $(\chi_{i})$.  By the symmetrization inequality (see e.g. Lemma 6.3. in \cite{LedouxTalagrand1991}) we have
\begin{displaymath}
\E W^{q} \le  2^{q} \E \sup_{x \in B_1^n} \Big|\frac{1}{p}\sum_{i=1}^p \varepsilon_i |x^T Z_i|\Big|^{q}.
\end{displaymath}
Now, since the function $t \mapsto |t|$ is a contraction, an application of Lemma \ref{le:contraction}, conditionally on $Z_i$, gives
\begin{align}\label{eq:one_more}
\E W^{q} &\le 2^{2q} \E \sup_{x \in B_1^n} \Big| \frac{1}{p} \sum_{i=1}^p \varepsilon_i x^T Z_i\Big|^{q} = \frac{2^{2q}}{p^{q}} \E \sup_{x \in B_1^n} \Big| x^T \sum_{i=1}^p \varepsilon_i Z_i\Big|^{q}\nonumber \\
&= \frac{2^{2q}}{p^{q}}\E \Big\|\sum_{i=1}^p \varepsilon_i Z_i\Big\|_\infty^{q} = \frac{2^{2q}}{p^{q}}\E \max_{1\le j\le n} \Big|\sum_{i=1}^p \varepsilon_i Z_i(j)\Big|^{q}\nonumber\\
&\le \frac{2^{2q}}{p^{q}}\sum_{j=1}^n \E \Big|\sum_{i=1}^p \varepsilon_i Z_i(j)\Big|^{q}.
\end{align}
Now, for every $i,j$ and every integer $k \ge 2$ we have
\begin{displaymath}
\E |Z_i(j)|^k = \theta \E |U_i(j)|^k \le \theta M^k k! \E e^{|U_i(j)|/M} \le 2 k! \theta M^k = k! v M^{k-2}/2
\end{displaymath}
with $v = 4\theta M^2$. Thus by the moment version \eqref{eq:Bernstein-moments} of Bernstein's inequality for some universal constant $C$ we get
\begin{displaymath}
\E \Big|\sum_{i=1}^p \varepsilon_i X_i(j)\Big|^{q} \le C^{q} \Big( \sqrt{qp\theta}M + qM\Big)^q,
\end{displaymath}
which, when combined with \eqref{eq:one_more}, yields for $q \ge \log n$,
\begin{displaymath}
\|W\|_{q} \le \frac{4Ce}{p}(\sqrt{p\theta q} + q)M.
\end{displaymath}

The first part of the proposition follows by adjusting the constant $C$. The tail bound is a direct consequence of the Chebyshev inequality for the $q$-th moment.
\end{proof}
\bibliographystyle{amsplain}	
\bibliography{ErSpUd}
\end{document}